\newcommand{\lyxaddress}[1]{
	\par {\raggedright #1
	\vspace{1.4em}
	\noindent\par}
}
\theoremstyle{plain}
\newtheorem{thm}{\protect\theoremname}
\theoremstyle{plain}
\newtheorem{prop}[thm]{\protect\propositionname}
\providecommand{\propositionname}{Proposition}
\providecommand{\theoremname}{Theorem}
\begin{document}
\title{Some trigonometric integrals and the Fourier transform of a spherically
symmetric exponential function}
\author{Hideshi YAMANE}
\maketitle

\lyxaddress{Department of Mathematical Sciences, Kwansei Gakuin University \\
Gakuen 2-1, Sanda, Hyogo, Japan 669-1337\\
\texttt{yamane@kwansei.ac.jp}\\
2000 Mathematics Subject Classification: Primary 33 \\
}
\begin{abstract}
We  calculate the Fourier transform of a spherically symmetric exponential
function. Our evaluation is much simpler than the known one. We use
the polar coordinates and reduce the Fourier transform to the integral
of a rational function of trigonometric functions. Its evaluation
turns out to be much easier than expected because of homogeneity and
a hidden symmetry. Relationship with a Fourier integral representation
formula for harmonic functions is explained.
\end{abstract}

\section{Fourier transform }

In this article we give a simple proof of the following theorem about
the Fourier transform of a spherically symmetric function. 
\begin{thm}
\label{thm:Stein-Weiss}If $a>0$, we have 
\begin{equation}
\int_{\mathbb{R}^{n}}e^{-2\pi a|x|}e^{-2\pi it\cdot x}\,dx=\frac{a\Gamma[(n+1)/2]}{\pi^{(n+1)/2}(a^{2}+|t|^{2})^{(n+1)/2}}.\label{eq: Stein-Weiss}
\end{equation}
\end{thm}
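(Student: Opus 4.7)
The plan is first to use rotational invariance to assume $t = (|t|, 0, \dots, 0)$ and then pass to spherical coordinates $x = r\omega$ with polar angle $\theta$ measured from the $e_1$ axis. The spherical integration reduces the angular part to $|S^{n-2}| \int_0^\pi e^{-2\pi i |t| r \cos\theta} \sin^{n-2}\theta\,d\theta$, and carrying out the radial $r$-integration first (a convergent Gamma-function integral with complex exponent $a + i|t|\cos\theta$, whose real part is $a > 0$) rewrites the Fourier transform as
\[
\frac{|S^{n-2}|\,\Gamma(n)}{(2\pi)^n}\, J, \qquad J \;=\; \int_0^\pi \frac{\sin^{n-2}\theta}{(a + i|t|\cos\theta)^n}\, d\theta.
\]
With $|S^{n-2}| = 2\pi^{(n-1)/2}/\Gamma((n-1)/2)$ in hand, the entire theorem reduces to evaluating $J$.

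For $J$, the strategy is to apply the Weierstrass half-angle substitution $u = \tan(\theta/2)$ to turn $J$ into an integral of a rational function over $(0,\infty)$. Writing $b = a + i|t|$ (so $\bar b = a - i|t|$ and $b\bar b = a^2 + |t|^2$), this reduces to the tidy form
\[
J \;=\; 2^{n-1}\int_0^\infty \frac{u^{n-2}(1+u^2)}{(b + \bar b\, u^2)^n}\, du,
\]
which invites the complex rescaling $u = v\sqrt{b/\bar b}$: this diagonalizes the denominator as $b^n(1+v^2)^n$, and the outer $b, \bar b$ factors then assemble perfectly into $(b\bar b)^{(n+1)/2} = (a^2 + |t|^2)^{(n+1)/2}$, which is exactly the denominator appearing in \eqref{eq: Stein-Weiss}. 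The contour rotation implicit in this substitution is justified by analyticity and decay, since the poles of the rational integrand lie on the imaginary $v$-axis and the integrand is $O(|v|^{-n-2})$ at infinity.

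The hidden symmetry is exposed at this stage: the rescaling leaves the integrand proportional to $v^{n-2}(\bar b + b v^2)/(1+v^2)^n$, whose two pieces $v^{n-2}/(1+v^2)^n$ and $v^n/(1+v^2)^n$ are interchanged by the involution $v \mapsto 1/v$ on $(0,\infty)$. Hence the two integrals are equal, so the coefficients combine as $\bar b + b = 2a$. This cancellation of imaginary parts is what forces the Fourier transform to be real. The surviving $\int_0^\infty v^{n-2}(1+v^2)^{-n}\,dv$ is a standard beta-function value $\tfrac12 B\bigl((n-1)/2,(n+1)/2\bigr) = \tfrac12\Gamma((n-1)/2)\Gamma((n+1)/2)/\Gamma(n)$, and collecting the constants delivers \eqref{eq: Stein-Weiss}.

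The main obstacle is spotting the complex rescaling $u = v\sqrt{b/\bar b}$ and the $v\mapsto 1/v$ symmetry together: neither is visible from the original integral, and without both, one is left with a complex-valued expression whose reality is mysterious. The case $n=1$, in which the preliminary spherical reduction is vacuous, must be handled separately by the direct one-dimensional computation $\int_{\mathbb R} e^{-2\pi a|x|-2\pi i tx}\,dx = a/(\pi(a^2+t^2))$.
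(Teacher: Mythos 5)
Your proof is correct, and although the opening reduction (rotating $t$ onto the first axis, polar coordinates, radial Gamma integral) coincides with the paper's, your evaluation of the core angular integral $J=\int_0^\pi\sin^{n-2}\theta\,(a+i|t|\cos\theta)^{-n}\,d\theta$ --- which is the whole point of the paper --- takes a genuinely different route. The paper isolates the model integral $\int_0^\pi\sin^m x\,(a+b\cos x)^{-(m+1)}dx$ for real $a>b>0$, shows after replacing $a,b$ by $\sqrt a,\sqrt b$ that it depends only on $a-b$ (its ``hidden symmetry'': $\partial_af+\partial_bf$ is the integral of an exact derivative), pins down the answer by positive homogeneity plus a limit, and only then reaches the complex parameter $b=ip$ by analytic continuation. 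You attack the complex integral head-on: the Weierstrass substitution yields the denominator $(b+\bar b u^2)^n$ with $b=a+i|t|$, the rotation $u=v\sqrt{b/\bar b}$ extracts $(b\bar b)^{-(n+1)/2}=(a^2+|t|^2)^{-(n+1)/2}$, and the involution $v\mapsto1/v$ produces $b+\bar b=2a$ and leaves a Beta integral; I checked that the constants assemble to the stated right-hand side, and your separate treatment of $n=1$ is appropriate. One advantage of your route is that it evaluates exactly the integral that arises (numerator $\sin^{n-2}$, denominator to the power $n$), whereas the paper's Proposition treats only the balanced case where the denominator's exponent exceeds the numerator's by one, so an additional differentiation in $a$ is needed to produce the factor $a$ in the theorem --- a step the paper elides (its final display even shows the exponent $n-1$ where the radial integration actually yields $n$). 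What the paper's method buys in exchange is the complete avoidance of contour deformation and, via Carlson's theorem, the extension to non-integer exponents. One small slip on your side: the integrand $v^{n-2}(\bar b+bv^2)(1+v^2)^{-n}$ is $O(|v|^{-n})$ at infinity, not $O(|v|^{-n-2})$; this is still ample to justify the rotation for $n\ge2$, since the swept sector between $\arg v=-\arg\sqrt{b/\bar b}$ and $\arg v=0$ avoids the poles at $v=\pm i$.
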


The Fourier transform of $e^{-2\pi a|x|}$ is much harder to obtain
than that of the Gaussian kernel $e^{-\pi\alpha|x|^{2}}$. Lower dimensional
($n\le3$) cases can be found in many books. For example, the one-
and three-dimensional cases are \cite[12.23.9]{GR} and \cite[12.24]{GR}
respectively. The general case can be found in the standard textbook
\cite{Stein-Weiss}. The proof in \cite[pp.6-7]{Stein-Weiss} based
on the unfamiliar formula 
\begin{equation}
e^{-\beta}=\frac{1}{\sqrt{\pi}}\int_{0}^{\infty}\frac{e^{-u}}{\sqrt{u}}e^{-\beta^{2}/4u}\,du,\quad\beta>0\label{eq:strangeformula}
\end{equation}
is technical and mysterious. We give a much simpler proof in this
article. Since we are dealing with a spherically symmetric function
$e^{-2\pi a|x|}$, it is natural to use the polar coordinates. Then
we are led to integrals involving trigonometric functions. Their evaluation
is interesting in its own right.

First we recall a result about the integral of the $n$-th power of
$\sin x$ (\cite[3.621]{GR}).
\begin{prop}
\label{prop:highschool} Set $\gamma_{n}=\int_{0}^{\pi}\sin^{n}\theta\,d\theta$
for $n\in\mathbb{N}=\{0,1,2,\dots\}$. Then we have $\gamma_{n}=2(n-1)!!/n!!$
if $n$ is odd and $\gamma_{n}=\pi(n-1)!!/n!!$ if $n$ is even. (By
convention, $(-1)!!=0!!=1$.) We also have 
\[
\gamma_{n}=2^{n}B\left(\frac{n+1}{2},\frac{n+1}{2}\right).
\]
\end{prop}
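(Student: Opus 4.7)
My plan is to prove the two assertions separately: first the closed form in terms of double factorials via a recurrence, then the Beta-function identity via a change of variable.

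For the recurrence, I would integrate $\int_0^\pi \sin^n\theta\,d\theta$ by parts, writing $\sin^n\theta=\sin^{n-1}\theta\cdot\sin\theta$ and differentiating $\sin^{n-1}\theta$. The boundary term vanishes at $\theta=0,\pi$, and after replacing $\cos^2\theta$ by $1-\sin^2\theta$ in the remaining integrand, one obtains the standard reduction $\gamma_n=\frac{n-1}{n}\gamma_{n-2}$. Then, starting from the base cases $\gamma_0=\pi$ and $\gamma_1=2$, iterating the recurrence produces $\gamma_n=\pi(n-1)!!/n!!$ in the even case and $\gamma_n=2(n-1)!!/n!!$ in the odd case, consistent with the convention $(-1)!!=0!!=1$.

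For the Beta-function identity, I would start from the classical representation
\[
B(x,y)=2\int_{0}^{\pi/2}\sin^{2x-1}\varphi\cos^{2y-1}\varphi\,d\varphi,
\]
which itself follows from the definition $B(x,y)=\int_0^1 t^{x-1}(1-t)^{y-1}\,dt$ via the substitution $t=\sin^2\varphi$. Specializing to $x=y=(n+1)/2$ gives
\[
B\!\left(\frac{n+1}{2},\frac{n+1}{2}\right)=2\int_{0}^{\pi/2}\sin^{n}\varphi\cos^{n}\varphi\,d\varphi=2\int_{0}^{\pi/2}\left(\tfrac{1}{2}\sin 2\varphi\right)^{n}d\varphi.
\]
Substituting $\theta=2\varphi$ collapses the right-hand side to $2^{-n}\int_0^\pi \sin^n\theta\,d\theta=2^{-n}\gamma_n$, yielding the stated formula.

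There is no real obstacle here: both parts are short exercises. The only place one has to be careful is the bookkeeping of the constant $2^n$ in the last step and the treatment of the base cases with the $(-1)!!=0!!=1$ convention, so that the double-factorial formula is valid already at $n=0$ and $n=1$.
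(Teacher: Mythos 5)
Your proposal is correct and follows essentially the same route as the paper's own (commented-out) proof: the double-factorial formula via the integration-by-parts recurrence $\gamma_n=\frac{n-1}{n}\gamma_{n-2}$ with base cases $\gamma_0=\pi$, $\gamma_1=2$, and the Beta identity via $t=\sin^2\varphi$ followed by the substitution $\theta=2\varphi$. Nothing further is needed.
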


The following proposition will be proved in Section \ref{sec:Integrating-a-rational}.
\begin{prop}
\label{prop: integtrig} For $a>0,p\in\mathbb{R},$ we have 
\begin{equation}
\int_{0}^{\pi}\frac{\sin^{n}x}{(a+ip\cos x)^{n+1}}\,dx=\frac{\gamma_{n}}{(a^{2}+p^{2})^{(n+1)/2}}\;(n=0,1,2,\dots).\label{eq:integtrig}
\end{equation}
\end{prop}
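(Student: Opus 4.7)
The plan is to exploit the homogeneity of the integrand together with a hidden rotational symmetry. First I would perform a scaling reduction: set $r=\sqrt{a^{2}+p^{2}}$ and write $a=r\cos\alpha$, $p=r\sin\alpha$ with $\alpha\in(-\pi/2,\pi/2)$, which is possible since $a>0$. Then $a+ip\cos x=r(\cos\alpha+i\sin\alpha\cos x)$, and pulling the factor $r^{n+1}$ out of the denominator, \eqref{eq:integtrig} reduces to showing that
\[
J(\alpha):=\int_{0}^{\pi}\frac{\sin^{n}x\,dx}{(\cos\alpha+i\sin\alpha\cos x)^{n+1}}=\gamma_{n}
\]
for every such $\alpha$. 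Since $J(0)=\int_{0}^{\pi}\sin^{n}x\,dx=\gamma_{n}$, it is enough to prove $J'(\alpha)\equiv 0$.

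Next, I would differentiate under the integral sign to obtain
\[
J'(\alpha)=(n+1)\int_{0}^{\pi}\frac{\sin^{n}x\,(\sin\alpha-i\cos\alpha\cos x)}{(\cos\alpha+i\sin\alpha\cos x)^{n+2}}\,dx,
\]
and try to express the integrand as the $x$-derivative of a function that vanishes at $x=0$ and $x=\pi$. The natural candidate is
\[
G(x):=\frac{\sin^{n+1}x}{(\cos\alpha+i\sin\alpha\cos x)^{n+1}}.
\]
Differentiating $G$ and using $\cos^{2}x+\sin^{2}x=1$ to collapse the resulting $i\sin\alpha(\cos^{2}x+\sin^{2}x)$ down to $i\sin\alpha$, one obtains
\[
G'(x)=\frac{(n+1)\sin^{n}x\,(\cos\alpha\cos x+i\sin\alpha)}{(\cos\alpha+i\sin\alpha\cos x)^{n+2}}.
\]
The identity $\sin\alpha-i\cos\alpha\cos x=-i(\cos\alpha\cos x+i\sin\alpha)$ shows that the integrand of $J'(\alpha)$ is exactly $-iG'(x)$, so $J'(\alpha)=-i\bigl[G(\pi)-G(0)\bigr]=0$, as required.

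The main creative step is guessing the antiderivative $G$. Once one insists that the numerator vanish at the endpoints and the denominator carry the original power $(n+1)$, the ansatz is essentially forced, and the Pythagorean cancellation does the rest. The remaining ingredients --- the scaling reduction, the differentiation under the integral sign, and the evaluation $J(0)=\gamma_{n}$ from Proposition \ref{prop:highschool} --- are routine.
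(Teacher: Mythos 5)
Your argument is correct: the polar substitution $a=r\cos\alpha$, $p=r\sin\alpha$ is valid because $a>0$ keeps $\cos\alpha>0$ (so the denominator never vanishes and differentiation under the integral sign is justified), the computation of $G'$ checks out, and $G$ vanishes at both endpoints for every $n\ge0$. This is essentially the paper's own proof in different coordinates --- the paper likewise factors out the homogeneity and kills the derivative in the one remaining parameter by recognizing the integrand as $\frac{d}{dx}$ of $\sin^{n+1}x/(\text{denominator})^{n+1}$, which is exactly your $G$ with $\cos\alpha$, $i\sin\alpha$ replaced by $\sqrt{a}$, $\sqrt{b}$; the only (mild) streamlinings on your side are that you work directly with the complex parameter $ip$, so you bypass the paper's analytic continuation from the real case $a>b>0$, and you fix the constant at $\alpha=0$ rather than via the paper's $a\to\infty$ dominated-convergence limit.
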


Now we prove Theorem \ref{thm:Stein-Weiss}. Set $x=r\xi,\xi\in S^{n-1}$.
Then $dx=r^{n-1}d\xi$, where $d\xi$ is the surface-area element
of $S^{n-1}$. Let us denote the left-hand side of (\ref{eq: Stein-Weiss})
by $I$. Then we have 
\begin{align*}
I & ={\displaystyle \int_{S^{n-1}}\,d\xi\int_{0}^{\infty}r^{n-1}e^{-2\pi ar}e^{-2\pi irt\cdot\xi}\,dr}{\displaystyle =\int_{S^{n-1}}\frac{(n-1)!}{\{2\pi(a+2\pi it\cdot\xi)\}^{n}}\,d\xi.}
\end{align*}
Let $g\in\mathrm{SO}(n)$ be a special orthogonal matrix with the
property $g^{t}(1,0,...,0)=|t|^{-1}t$. Set $\xi=g\eta$. Then $d\xi=d\eta$
and 
\[
t\cdot\xi=|t|\,g{}^{t}(1,0,...,0))\cdot g\eta=|t|{}^{t}(1,0,...,0)\cdot\eta=|t|\eta_{1}.
\]
We have
\[
I=\int_{S^{n-1}}\frac{(n-1)!}{\{2\pi(a+2\pi i|t|\eta_{1})\}^{n}}\,d\eta.
\]
The variable $\eta$ and the surface-area element $d\eta$ can be
written in terms of the angles $\theta_{1},\dots,\theta_{n-2}\in[0,\pi]$
and $\theta_{n-1}\in[0,2\pi]$ in the following way:
\begin{align*}
 & \eta_{1}=\cos\theta_{1},\;\eta_{2}=\sin\theta_{1}\cos\theta_{2},\;\eta_{3}=\sin\theta_{1}\sin\theta_{2}\cos\theta_{3},\dots,\\
 & \eta_{j}=\sin\theta_{1}\sin\theta_{2}\cdots\sin\theta_{j-1}\cos\theta_{j},\dots,\\
 & \eta_{n-1}=\sin\theta_{1}\sin\theta_{2}\cdots\sin\theta_{n-2}\cos\theta_{n-1},\\
 & \eta_{n}=\sin\theta_{1}\sin\theta_{2}\cdots\sin\theta_{n-2}\sin\theta_{n-1},\\
 & d\eta=\prod_{j=1}^{n-1}\sin^{n-1-j}\theta_{j}\,d\theta_{j}.
\end{align*}
We have 
\begin{equation}
I=\frac{(n-1)!}{(2\pi)^{n}}\int_{0}^{2\pi}d\theta_{n-1}\left(\prod_{j=2}^{n-2}\int_{0}^{\pi}\sin^{n-1-j}\theta_{j}\,d\theta_{j}\right)\int_{0}^{\pi}\frac{\sin^{n-2}\theta_{1}\,d\theta_{1}}{(a+i|t|\cos\theta_{1})^{n-1}}.
\end{equation}
By using Propositions \ref{prop:highschool} and \ref{prop: integtrig},
we obtain Theorem \ref{thm:Stein-Weiss}.

\section{Positively homogeneous functions}

The result of this section shall be used in the next section. A function
$f(x)$ on $\mathbb{R}_{+}=\left\{ x>0\right\} $ is said to be \textit{positively
homogeneous }of degree $\delta$ if 
\begin{equation}
f(\lambda x)=\lambda^{\delta}f(x)\label{eq: homogeneous}
\end{equation}
holds for any $x>0,\lambda>0$. A typical example is $x^{\delta}$.
It is the only positively homogeneous function of degree $\delta$
up to a constant factor. Indeed, we have $f(x)=x^{\delta}f(1)$. 

By differentiation in $\lambda$, the equation (\ref{eq: homogeneous})
yields $xf'(\lambda x)=\delta\lambda^{\delta-1}f(x)$. Plugging $\lambda=1$,
we obtain 
\begin{equation}
x\frac{d}{dx}f(x)=\delta f(x),\;x>0.\label{eq:euler}
\end{equation}
Conversely, any solution of (\ref{eq:euler}) is of the form $f(x)=Cx^{\delta}$
and is homogeneous of degree $\delta$. Summing up, positive homogeneity
of degree $\delta$ is equivalent to (\ref{eq:euler}).

Assume $\delta<0$. Then $f(x)=Cx^{\delta}$ diverges as $x\to+0$
if $C\ne0$. In other words, the operator $xd/dx-\delta$ has a trivial
kernel and is injective on the space of bounded differentiable functions
on $0<x<L$, where $L$ is an arbitrary positive constant. 

The differential operator $xd/dx$ is called the Euler operator and
plays an important role in the theory of ordinary differential equations
with regular singularities.

\section{Integrating a rational function of  trigonometric functions\label{sec:Integrating-a-rational}}

In this section, we prove Proposition \ref{prop: integtrig}. It follows
from Proposition \ref{prop: integtrig-1} below by analytic continuation.
Indeed, when $a>0$ is fixed, the integral $\int_{0}^{\pi}\sin^{n}x/(a+b\cos x)^{n+1}\,dx$
is a holomorphic function in the simply connected domain $\mathbb{C}\setminus\left\{ b\le-a,a\le b\right\} .$
\begin{prop}
\label{prop: integtrig-1} If $a>b>0$, we have 
\begin{equation}
\int_{0}^{\pi}\frac{\sin^{n}x}{(a+b\cos x)^{n+1}}\,dx=\frac{\gamma_{n}}{(a^{2}-b^{2})^{(n+1)/2}}\;(n=0,1,2,\dots).\label{eq:integtrig-1}
\end{equation}
\end{prop}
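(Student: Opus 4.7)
The plan is to use the Weierstrass substitution to convert the trigonometric integral into an integral of a rational function on $(0,\infty)$, and then to exploit two scaling symmetries together with the uniqueness principle for positively homogeneous functions developed in Section 2.

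First I would substitute $t=\tan(x/2)$, using $\sin x=2t/(1+t^{2})$, $\cos x=(1-t^{2})/(1+t^{2})$ and $dx=2\,dt/(1+t^{2})$. A direct simplification gives
\begin{equation*}
\int_{0}^{\pi}\frac{\sin^{n}x}{(a+b\cos x)^{n+1}}\,dx=2^{n+1}\int_{0}^{\infty}\frac{t^{n}\,dt}{[(a+b)+(a-b)t^{2}]^{n+1}}.
\end{equation*}
Writing $\alpha=a+b$ and $\beta=a-b$ (both positive under the hypothesis $a>b>0$), set $G(\alpha,\beta)=\int_{0}^{\infty}t^{n}/(\alpha+\beta t^{2})^{n+1}\,dt$, so that the integral to be evaluated equals $2^{n+1}G(\alpha,\beta)$.

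The crucial observations are two scaling identities for $G$. The substitution $t\mapsto t/\sqrt{\lambda}$ yields $G(\alpha,\lambda\beta)=\lambda^{-(n+1)/2}G(\alpha,\beta)$, so for each fixed $\alpha>0$ the function $\beta\mapsto G(\alpha,\beta)$ is positively homogeneous of degree $-(n+1)/2$. The ``hidden symmetry'' alluded to in the abstract comes from $t\mapsto 1/t$: a routine change of variables shows $G(\alpha,\beta)=G(\beta,\alpha)$, and combined with the previous scaling, $G$ is also positively homogeneous of degree $-(n+1)/2$ in $\alpha$. By the uniqueness of positively homogeneous functions of a given degree (Section 2), this forces $G(\alpha,\beta)=G(1,1)(\alpha\beta)^{-(n+1)/2}$.

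It remains to compute $G(1,1)=\int_{0}^{\infty}t^{n}/(1+t^{2})^{n+1}\,dt$. Setting $t=\tan\varphi$ and then $\theta=2\varphi$ reduces this to $\int_{0}^{\pi/2}\sin^{n}\varphi\cos^{n}\varphi\,d\varphi=2^{-(n+1)}\gamma_{n}$. Assembling everything,
\begin{equation*}
\int_{0}^{\pi}\frac{\sin^{n}x}{(a+b\cos x)^{n+1}}\,dx=2^{n+1}\cdot\frac{\gamma_{n}/2^{n+1}}{(\alpha\beta)^{(n+1)/2}}=\frac{\gamma_{n}}{(a^{2}-b^{2})^{(n+1)/2}}.
\end{equation*}
The only conceptual hurdle is spotting the involution $t\mapsto 1/t$, which upgrades homogeneity in a single variable into the full two-variable formula via Section 2; the Weierstrass bookkeeping in the first step is the most calculation-heavy but otherwise routine part.
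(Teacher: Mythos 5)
Your proof is correct, and it reaches the result by a genuinely different route from the paper, even though both arguments are organized around the same two slogans (homogeneity plus a hidden symmetry). The paper keeps the integral in trigonometric form: it reparametrizes $a,b$ as $\sqrt{a},\sqrt{b}$, observes joint positive homogeneity of degree $-(n+1)/2$, and then uncovers the symmetry by differentiating under the integral sign in $q=a+b$ and recognizing the integrand as an exact derivative, so that $f$ depends on $p=a-b$ alone; the constant is pinned down by a dominated-convergence limit as $a\to\infty$. You instead rationalize everything at the outset with the Weierstrass substitution, which turns the integral into $2^{n+1}G(\alpha,\beta)$ with $\alpha=a+b$, $\beta=a-b$; your hidden symmetry is the explicit involution $t\mapsto 1/t$ giving $G(\alpha,\beta)=G(\beta,\alpha)$, the scaling $t\mapsto t/\sqrt{\lambda}$ gives homogeneity of degree $-(n+1)/2$ in each variable separately, and the constant comes from the direct evaluation $G(1,1)=\int_{0}^{\pi/2}\sin^{n}\varphi\cos^{n}\varphi\,d\varphi=2^{-(n+1)}\gamma_{n}$ (which is exactly the computation behind $\gamma_{n}=2^{n}B(\tfrac{n+1}{2},\tfrac{n+1}{2})$ in Proposition \ref{prop:highschool}). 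I checked the bookkeeping: the powers of $1+t^{2}$ cancel exactly in the Weierstrass step, and both changes of variables do what you claim. What your version buys is concreteness --- no differentiation under the integral sign, no limit interchange, only elementary substitutions --- and the factorization $a^{2}-b^{2}=\alpha\beta$ emerges transparently from the separate homogeneities. What the paper's version buys is that it never leaves the trigonometric form (so the same template applies where no rationalizing substitution is available) and its exact-derivative trick is the step that generalizes to the inductive identity (\ref{eq:pa}) in the alternative proof.
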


Notice that the $n=0$ case is well-known as an example of residue
calculus (e.g. \cite{Ahlfors}), but here we give a completely different
proof. Proposition \ref{prop: integtrig-1} is equivalent to Proposition
\ref{prop: integtrig-1-1} below. 
\begin{prop}
\label{prop: integtrig-1-1} If $a>b>0$, we have 
\begin{equation}
\int_{0}^{\pi}\frac{\sin^{n}x}{(\sqrt{a}+\sqrt{b}\cos x)^{n+1}}\,dx=\frac{\gamma_{n}}{(a-b)^{(n+1)/2}}\;(n=0,1,2,\dots).\label{eq:integtrig-1-1}
\end{equation}
\end{prop}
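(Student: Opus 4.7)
The plan is to apply the Weierstrass half-angle substitution $t=\tan(x/2)$ to reduce the integral to a rational integral over $(0,\infty)$ whose denominator transparently exhibits a hidden swap symmetry, and then to use a homogeneous rescaling in the spirit of Section~2 to separate the $\sqrt a$ and $\sqrt b$ dependence completely.

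First, under $t=\tan(x/2)$ one has $\sin x = 2t/(1+t^2)$, $\cos x = (1-t^2)/(1+t^2)$, and $dx = 2\,dt/(1+t^2)$. Setting $A = \sqrt a - \sqrt b$ and $B = \sqrt a + \sqrt b$ (both positive by the hypothesis $a>b>0$), the denominator rewrites as $\sqrt a + \sqrt b\cos x = (B + At^2)/(1+t^2)$. After cancelling the $(1+t^2)$-factors, the integral becomes
\[
\int_0^\pi\frac{\sin^n x}{(\sqrt a + \sqrt b\cos x)^{n+1}}\,dx \;=\; 2^{n+1}\int_0^\infty\frac{t^n}{(B+At^2)^{n+1}}\,dt.
\]

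The hidden symmetry is now plain to see: the new integrand is invariant under the swap $A\leftrightarrow B$ (verified by $t\mapsto 1/t$), so the result can depend on $A,B$ only through their product $AB = a-b$. To exploit this I rescale $s = t\sqrt{A/B}$---precisely the positively-homogeneous dilation of $t$ that matches the scaling degree of the integrand, in the Euler-operator spirit of Section~2. This turns $B+At^2$ into $B(1+s^2)$ and factors the $A,B$-dependence out completely, giving
\[
2^{n+1}\int_0^\infty\frac{t^n\,dt}{(B+At^2)^{n+1}} \;=\; \frac{2^{n+1}}{(AB)^{(n+1)/2}}\int_0^\infty\frac{s^n\,ds}{(1+s^2)^{n+1}} \;=\; \frac{2^{n+1}}{(a-b)^{(n+1)/2}}\int_0^\infty\frac{s^n\,ds}{(1+s^2)^{n+1}}.
\]

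Finally, I would evaluate the universal integral $\int_0^\infty s^n/(1+s^2)^{n+1}\,ds$ by $s=\tan\phi$, which converts it to $\int_0^{\pi/2}\sin^n\phi\cos^n\phi\,d\phi$; the double-angle identity together with the substitution $\theta = 2\phi$ then identifies this value as $\gamma_n/2^{n+1}$ via Proposition~\ref{prop:highschool}. Assembling the factors produces exactly $\gamma_n/(a-b)^{(n+1)/2}$. I do not anticipate a substantive obstacle: every step is an elementary substitution, and the only care required is (i) the endpoint correspondence $x=0,\pi \leftrightarrow t=0,\infty$, automatic because $A,B>0$, and (ii) the bookkeeping of half-integer exponents of $A$ and $B$ during the rescaling.
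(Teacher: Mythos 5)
Your proof is correct, but it follows a genuinely different route from the paper. You reduce the integral by the Weierstrass substitution $t=\tan(x/2)$, which turns the denominator into $(B+At^{2})/(1+t^{2})$ with $A=\sqrt{a}-\sqrt{b}$, $B=\sqrt{a}+\sqrt{b}$; every cancellation you claim checks out, the rescaling $s=t\sqrt{A/B}$ does produce the factor $(AB)^{-(n+1)/2}=(a-b)^{-(n+1)/2}$, and the universal integral $\int_{0}^{\infty}s^{n}(1+s^{2})^{-(n+1)}\,ds=\int_{0}^{\pi/2}\sin^{n}\phi\cos^{n}\phi\,d\phi=\gamma_{n}/2^{n+1}$ is exactly the computation hidden in Proposition \ref{prop:highschool}. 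The paper instead never evaluates the integral directly: it observes that $f(a,b)$ is positively homogeneous of degree $-(n+1)/2$, shows by differentiation under the integral sign that $\partial f/\partial a+\partial f/\partial b$ is an exact derivative and hence vanishes (the ``hidden symmetry''), concludes that $f$ depends on $p=a-b$ alone and so equals $\mathrm{const.}\,p^{-(n+1)/2}$, and pins down the constant by the limit $a\to\infty$. Your version is more elementary and self-contained --- it explains in one line why only the product $AB=a-b$ survives, namely because $A$ and $B$ enter the rational integrand symmetrically and the dilation $s=t\sqrt{A/B}$ extracts $(AB)^{-(n+1)/2}$ --- at the cost of an explicit change of variables and some exponent bookkeeping; the paper's argument buys a structural explanation (Euler operator plus a vanishing $q$-derivative) that motivates the whole of Section 2 and the alternative inductive proof. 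One small caveat: your remark that the $A\leftrightarrow B$ swap symmetry by itself forces dependence ``only through the product $AB$'' is a heuristic, not a deduction (symmetry alone gives dependence on $A+B$ and $AB$); but since your actual derivation rests on the explicit rescaling rather than on that remark, nothing is missing.
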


Now we prove Proposition \ref{prop: integtrig-1-1}. Let $f(a,b)$
be the left hand side of (\ref{eq:integtrig-1-1}). It is positively
homogeneous in the sense that $f(\lambda a,\lambda b)=\lambda^{-(n+1)/2}f(a,b),\lambda>0$.
On the other hand, when $b$ is fixed and $a$ tends to $\infty$,
we have by Lebesgue's dominated convergence theorem 
\begin{align}
\lim_{a\to\infty}a^{(n+1)/2}f(a,b) & =\int_{0}^{\pi}\lim_{a\to\infty}\frac{a^{(n+1)/2}\sin^{n}x}{(\sqrt{a}+\sqrt{b}\cos x)^{n+1}}\,dx\nonumber \\
 & =\int_{0}^{\pi}\sin^{n}x\,dx=\gamma_{n}.\label{eq:limit}
\end{align}
Set $p=a-b,q=a+b$. Differentiation under the integral sign gives
\begin{align*}
2\frac{\partial f}{\partial q} & =\frac{\partial f}{\partial a}+\frac{\partial f}{\partial b}=\frac{-(n+1)}{2\sqrt{ab}}\int_{0}^{\pi}\frac{(\sqrt{b}+\sqrt{a}\cos x)\sin^{n}x}{(\sqrt{a}+\sqrt{b}\cos x)^{n+2}}\,dx\\
 & =\frac{-1}{2\sqrt{ab}}\left[\frac{\sin^{n+1}x}{(\sqrt{a}+\sqrt{b}\cos x)^{n+1}}\right]_{0}^{\pi}=0.
\end{align*}
So we have discovered a hidden symmetry in $f$. It is independent
of $q$ and is a function of $p$ alone. We denote $f(a,b)=f(p)$
by abuse of notation. The positive homogeneity means $f(p)$ is homogeneous
of degree $-(n+1)/2$. Therefore 
\[
f(a,b)=f(p)=\frac{\text{const.}}{p^{(n+1)/2}}=\frac{\text{const.}}{(a-b)^{(n+1)/2}}.
\]
This constant must be $\gamma_{n}$ because of (\ref{eq:limit}).
The proofs of Propositions \ref{prop: integtrig-1-1}, \ref{prop: integtrig-1},
\ref{prop: integtrig} are now complete.

We give an alternative proof of Proposition \ref{prop: integtrig-1}.
It appeared in \cite{Yamane} as a proof of Proposition \ref{prop: integtrig}.
First, Proposition \ref{prop: integtrig-1} holds for $n=0,1$. If
$n=0$, we have $2\int_{0}^{\pi}=\int_{0}^{2\pi}$ and the right hand
side is well-known in complex analysis (e.g. \cite{Ahlfors}). If
$n=1$, the primitive can be found easily. 

The remaining part can be proved by induction. Let $I_{n}$ be the
left hand side of (\ref{eq:integtrig-1}). If $n\ge2$, integration
by parts gives 
\begin{align*}
I_{n} & =\int_{0}^{\pi}\frac{(-\cos x)'\sin^{n-1}x}{(a+b\cos x)^{n+1}}\,dx\\
 & =(n-1)\left\{ \int_{0}^{\pi}\frac{\sin^{n-2}x}{(a+b\cos x)^{n+1}}\,dx-I_{n}\right\} \\
 & \quad\;+(n+1)b\int_{0}^{\pi}\frac{\sin^{n}x\cos x}{(a+b\cos x)^{n+2}}\,dx.
\end{align*}
The integrals are derivatives of $I_{n-2}$ and $I_{n}$ with respect
to $a$ and $b$ up to constant factors. We get 
\begin{equation}
\Bigl(b\frac{\partial}{\partial b}+n\Bigr)I_{n}=\frac{1}{n}\frac{\partial^{2}}{\partial a^{2}}I_{n-2}\quad(n\ge2).\label{eq:pa}
\end{equation}
If $I_{n-2}=\gamma_{n-2}(a^{2}-b^{2})^{-\frac{n-1}{2}}$, then $I_{n}=\gamma_{n}(a^{2}-b^{2})^{-\frac{n+1}{2}}$
satisfies (\ref{eq:pa}). It is the unique solution of (\ref{eq:pa})
because of the injectivity of $b\partial/\partial b+n$. Induction
proceeds.

\section{Generalization of Proposition \ref{prop: integtrig-1}}

Proposition \ref{prop: integtrig-1} can be found in \cite[3.665.1]{GR}
in a more general form. It states
\begin{prop}
If $a>b>0,\mathrm{Re}\,\mu>0$, we have 
\begin{equation}
\int_{0}^{\pi}\frac{\sin^{\mu-1}x\,dx}{(a+b\cos x)^{\mu}}=\frac{2^{\mu-1}}{\sqrt{(a^{2}-b^{2})^{\mu}}}B\left(\frac{\mu}{2},\frac{\mu}{2}\right)
\end{equation}
\end{prop}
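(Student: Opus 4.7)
The plan is to imitate verbatim the argument of Section \ref{sec:Integrating-a-rational} (the proof of Proposition \ref{prop: integtrig-1-1}), the only changes being that the exponent is now a complex number with positive real part and that the final constant must be identified in closed form.

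First I would pass to the equivalent statement obtained by the substitution $a\mapsto\sqrt{a},\,b\mapsto\sqrt{b}$. Writing
$$
f(a,b):=\int_{0}^{\pi}\frac{\sin^{\mu-1}x\,dx}{(\sqrt{a}+\sqrt{b}\cos x)^{\mu}},\qquad a>b>0,
$$
the proposition becomes $f(a,b)=2^{\mu-1}B(\mu/2,\mu/2)\,(a-b)^{-\mu/2}$. The scaling $(a,b)\mapsto(\lambda a,\lambda b)$ shows immediately that $f$ is positively homogeneous of degree $-\mu/2$.

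Next I would repeat the ``hidden symmetry'' computation. Differentiation under the integral sign (licit because $\mathrm{Re}\,\mu>0$ makes $\sin^{\mu-1}x$ integrable on $[0,\pi]$) gives
$$
\partial_{a}f+\partial_{b}f=-\frac{\mu}{2\sqrt{ab}}\int_{0}^{\pi}\frac{\sin^{\mu-1}x\,(\sqrt{b}+\sqrt{a}\cos x)}{(\sqrt{a}+\sqrt{b}\cos x)^{\mu+1}}\,dx.
$$
The crucial algebraic identity, which holds for \emph{any} exponent and is the same one secretly used in the integer case, is
$$
\frac{d}{dx}\!\left[\frac{\sin^{\mu}x}{(\sqrt{a}+\sqrt{b}\cos x)^{\mu}}\right]=\frac{\mu\sin^{\mu-1}x\,(\sqrt{b}+\sqrt{a}\cos x)}{(\sqrt{a}+\sqrt{b}\cos x)^{\mu+1}}
$$
(obtained by using $\sin^{2}x+\cos^{2}x=1$ to absorb the $\cos^{2}x$ term). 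The integrand is therefore an exact derivative, and since $\mathrm{Re}\,\mu>0$ forces $\sin^{\mu}x$ to vanish at both endpoints, we conclude $\partial_{a}f+\partial_{b}f=0$. So $f$ depends on $p=a-b$ alone, and combined with the degree of homogeneity we get $f(a,b)=C\,(a-b)^{-\mu/2}$ for some constant $C=C(\mu)$.

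Finally I would pin down $C$ by letting $a\to\infty$ with $b$ fixed. Dominated convergence (with dominant $\sin^{\mathrm{Re}\,\mu-1}x/b_{0}^{\mu/2}$ for any $b_{0}<b$, say) yields
$$
C=\lim_{a\to\infty}a^{\mu/2}f(a,b)=\int_{0}^{\pi}\sin^{\mu-1}x\,dx=B\!\left(\tfrac{\mu}{2},\tfrac{1}{2}\right),
$$
and the Legendre duplication formula converts this into $2^{\mu-1}B(\mu/2,\mu/2)$, as desired.

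The real work is minor and bookkeeping in nature: one must check that differentiation under the integral sign and the dominated-convergence step at $a\to\infty$ really go through when $\mu$ is complex with positive real part, since the integrand is only integrable (not bounded) near $x=0,\pi$. Once that is verified, the conceptual skeleton of the earlier proof --- scaling homogeneity plus the exact-derivative identity that kills $\partial_{a}+\partial_{b}$ --- carries over without modification, which is exactly why the more general formula holds.
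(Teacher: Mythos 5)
Your argument is correct, but it is not the route the paper takes. The paper derives this proposition from the already-established integer cases (Proposition \ref{prop: integtrig-1} with $\mu=n+1$, together with Proposition \ref{prop:highschool} to put $\gamma_{n}$ in Beta-function form) and then extends to all $\mathrm{Re}\,\mu>0$ by Carlson's theorem (Theorem \ref{thm:Carlson}), i.e.\ by uniqueness of analytic continuation in the exponent. You instead rerun the homogeneity-plus-hidden-symmetry proof of Proposition \ref{prop: integtrig-1-1} directly for complex $\mu$: the exact-derivative identity $\frac{d}{dx}\bigl[\sin^{\mu}x/(\sqrt{a}+\sqrt{b}\cos x)^{\mu}\bigr]=\mu\sin^{\mu-1}x\,(\sqrt{b}+\sqrt{a}\cos x)/(\sqrt{a}+\sqrt{b}\cos x)^{\mu+1}$ does hold for arbitrary exponents, the boundary terms vanish precisely because $\mathrm{Re}\,\mu>0$, and the constant $\int_{0}^{\pi}\sin^{\mu-1}x\,dx=B(\mu/2,1/2)=2^{\mu-1}B(\mu/2,\mu/2)$ comes out of the duplication formula, so the proof closes. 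Your version is more self-contained: it needs no Carlson-type uniqueness statement and no separate verification that the two sides, suitably normalized, are bounded analytic functions of $\mu$ in a half-plane (a hypothesis the paper leaves implicit and which is slightly delicate near $\mathrm{Re}\,\mu=0$). The paper's version buys economy --- the hard analysis is done once, for integers, and the general case is pure function theory in the parameter. The only points you should tidy are routine: the justification of differentiation under the integral sign and of the fundamental theorem of calculus for the absolutely continuous (not $C^{1}$ up to the endpoints) function $\sin^{\mu}x/(\sqrt{a}+\sqrt{b}\cos x)^{\mu}$, and the dominating function for the $a\to\infty$ limit, which should be a real constant times $\sin^{\mathrm{Re}\,\mu-1}x$ rather than the complex expression you wrote.
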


This proposition is a consequence of Propositions \ref{prop:highschool},
\ref{prop: integtrig-1} and Carlson's theorem below. 
\begin{thm}
\label{thm:Carlson}(i) If $f(z)$ is analytic and bounded for $\mathrm{Re}\,z\ge0$
and if $f(n)=0$ for $n=0,1,2,\dots$, then $f(z)$ is identically
zero. 

(ii) If $f(z)$ is analytic and bounded for $\mathrm{Re}\,z>0$ and
if $f(n)=0$ for $n=1,2,\dots$, then $f(z)$ is identically zero.

(iii) If $f(z)$ and $g(z)$ are analytic and bounded for $\mathrm{Re}\,z>0$
and if $f(n)=g(n)$ for $n=1,2,\dots$, then $f(z)=g(z)$ holds for
$\mathrm{Re}\,z>0$.
\end{thm}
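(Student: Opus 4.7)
The plan is to prove part (ii) first, from which parts (i) and (iii) will follow at once. For (i), restricting $f$ to the open right half-plane gives a function satisfying the hypotheses of (ii) (the zero of $f$ at $0$ is not even needed), so $f\equiv 0$ on $\{\mathrm{Re}\,z>0\}$ and hence on the closed half-plane by continuity. For (iii), I would apply (ii) to the difference $h=f-g$, which is bounded analytic on $\{\mathrm{Re}\,z>0\}$ and vanishes at every positive integer.

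My strategy for (ii) is to transport the problem to the open unit disk via a Cayley transform. Specifically, $\phi(z)=(z-1)/(z+1)$ maps $\{\mathrm{Re}\,z>0\}$ biholomorphically onto $\mathbb{D}$, and the pullback $F(w)=f(\phi^{-1}(w))$ is a bounded analytic function on $\mathbb{D}$ whose zero set contains the images $w_n=\phi(n)=(n-1)/(n+1)$ of the positive integers. A short calculation gives
\[
1-|w_n|=\frac{2}{n+1},
\]
so the series $\sum_{n=1}^{\infty}(1-|w_n|)$ diverges. The Blaschke theorem on zero sets of bounded analytic functions in the disk then forces $F\equiv 0$, and hence $f\equiv 0$.

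I do not anticipate any real obstacle here: once one sees that a conformal change of variable reduces the bounded version of Carlson's theorem to a classical fact about zero sets of $H^{\infty}$ functions on $\mathbb{D}$, the rest is routine verification. The only mildly nontrivial step is recognizing the right external result to invoke and checking that $\{w_n\}$ accumulates at the boundary linearly enough to violate the Blaschke summability condition. I should stress that boundedness is used in an essential way: the sharper classical version of Carlson's theorem, which assumes only that $f$ is of exponential type strictly less than $\pi$, requires a more delicate Phragm\'en--Lindel\"of argument and is not needed for the application in this paper.
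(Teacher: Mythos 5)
Your argument is correct, but it takes a genuinely different route from the paper. The paper treats (i) as the primitive fact, citing it from Andrews--Askey--Roy (with the same observation you make, that the hypothesis $f(0)=0$ is superfluous), and then bootstraps to (ii) by applying (i) to $g(z)=f(z+1)$ and invoking the identity theorem, with (iii) as an immediate corollary. You instead prove the open-half-plane statement (ii) from scratch: the Cayley transform $\phi(z)=(z-1)/(z+1)$ carries $f$ to a function $F\in H^{\infty}(\mathbb{D})$ vanishing at $w_{n}=(n-1)/(n+1)$, and since $\sum_{n}(1-|w_{n}|)=\sum_{n}2/(n+1)=\infty$ violates the Blaschke condition, $F\equiv0$; then (i) and (iii) fall out as corollaries. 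Your computation $1-|w_{n}|=2/(n+1)$ is right, and the logical dependencies all check. What each approach buys: the paper's version is shorter on the page because the hard analytic content is outsourced to a reference; yours is self-contained modulo a single standard fact about zero sets of bounded analytic functions on the disk, it reverses the direction of deduction so that the strongest statement (ii) comes first, and it makes transparent exactly where boundedness is used --- as you correctly note, the sharper exponential-type version of Carlson's theorem would need a Phragm\'en--Lindel\"of argument that is unnecessary for the application here.
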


\begin{proof}
The weaker version (i) can be found in \cite[p.110]{Askey}. As a
matter of fact, the assumption $f(0)=0$ is superflous: the proof
given there does not use $f(0)=0$. Anyway $n=0$ is removed in (ii),
which is proved in the following way. Let $f(z)$ be analytic and
bounded for $\mathrm{Re}\,z>0$ and assume $f(n)=0$ for $n=1,2,\dots$
If (i) is already known, we can apply it to $g(z)=f(z+1)$ and show
that $f(z)$ is identically zero for $\mathrm{Re}\,z\ge1$. It follows
that $g(z)$ is identically zero for $\mathrm{Re}\,z\ge0$ by analytic
continuation. (This argument shows that it is enough to assume $f(z)=0$
for sufficiently large integers.) Now (iii) is a trivial consequence
of (ii).
\end{proof}

\section{Fourier-Ehrenpreis integral formula}

Proposition \ref{prop: integtrig} was used in \cite{Yamane}. Here
we sketch its main result, which has some similarities with (\ref{eq: Stein-Weiss}). 

Students of math, physics and engineering are taught how to solve
ordinary differential equations of the form $y''+ay'+by=0\,(a,b:\text{const.})$.
Any solution is a superposition (a linear combination) of exponential
solutions (if $a^{2}-4b\ne0$). Such a statement holds true for a
very general class of (systems of) linear partial differential equations
with constant coefficients. It is known as the Fundamental Principle
of Ehrenpreis (\cite{ehren}). In the PDE case, roughly speaking,
any solution is obtained from a superposition of exponential solutions,
where a superposition means an integral with respect to a measure.
The original finding was that such a measure just exists. Later developments
concentrated on expressing such a measure in terms of differential
forms in an more or less concrete way (e.g. \cite{passare}). In the
case of harmonic functions, a very explicit formula was obtained in
\cite{Yamane}.

Let $z=(z_{1},\dots,z_{n})$ be the coordinate of $\mathbb{C}^{n}\,(n\ge3)$.
Set $y=(y_{1},\dots,y_{n})$, where $y_{j}$ is the imaginary part
of $z_{j}$. We consider $V=\bigl\{ z\in\mathbb{C}^{n};z^{2}=\sum_{j=1}^{n}z_{j}^{2}=0\bigr\}$
. The exponential function $f_{z}(t)=\exp(-iz\cdot t)$ is harmonic
in $t$. The main result of \cite{Yamane} states that any harmonic
function is a superposition of such harmonic exponentials. Let $B_{n}$
be the open unit ball of $\mathbb{R}^{n}\,(n\ge3)$. Assume that $u(t)$
is harmonic in $B_{n}$ and is continuous up to the boundary $S^{n-1}$.
Let $f=u|_{\partial B_{n}}$ be its Dirichlet boundary value. Then
in $B_{n}$, we have
\begin{equation}
u(t)=\frac{1}{2(2\pi)^{n-1}}\int_{V}\Bigl(1-\frac{n-2}{2|y|}\Bigr)f(y/|y|)e^{-iz\cdot t}e^{-|y|}\Bigl(\frac{dx\wedge dy}{|y|}\Bigr)^{n-1}.\label{eq:harmmain}
\end{equation}
 In particular, $u(t)$ is a superposition of the exponentials $\exp(-iz\cdot t)$
with $z^{2}=\sum_{j=1}^{n}z_{j}^{2}=0$. and $y/|y|\in\mathrm{supp\,}f$.

\end{document}